\numberwithin{equation}{section}
\newtheorem{main}{Theorem}
\newtheorem{appell}[main]{Proposition}
\newtheorem{product}[main]{Theorem}
\newtheorem{standard}[main]{Corollary}
\newtheorem{kenter}[main]{Corollary}
\newtheorem{euler}[main]{Corollary}
\begin{document}

\title[Riordan Matrix Representations of $\gamma$ and $e$]{Riordan Matrix Representations of \\ Euler's Constant $\gamma$ and Euler's Number $e$}

\author{Edray Herber Goins}
\address{Purdue University, Department of Mathematics, Mathematical Sciences Building, 150 North University Street, West Lafayette, IN 47907-2067}
\email{egoins@math.purdue.edu}

\author{Asamoah Nkwanta}
\address{Morgan State University, Department of Mathematics, 1700 East Cold Spring Lane, Baltimore, MD 21251}
\email{asamoah.nkwanta@morgan.edu}

\begin{abstract} We show that the Euler-Mascheroni constant $\gamma$ and Euler's number $e$ can both be represented as a product of a Riordan matrix and certain row and column vectors. \end{abstract}

\subjclass[2010]{05A15, 11B83, 20Cxx, 30E99, 15A23, 13F25, 13J05, 41A58}
\keywords{Riordan matrices; Appell subgroup; generating functions; representation theory; complex residues}
\dedicatory{This paper is dedicated to David Harold Blackwell (April 24, 1919 -- July 8, 2010).}

\maketitle

\section{Introduction}

It was shown by Kenter \cite{Kenter:1999p20766} that the Euler-Mascheroni constant
\begin{equation} \gamma = \displaystyle \lim_{n \to \infty} \left[ \left( \sum_{m=1}^n \dfrac 1m \right) - \ln n \right] = 0.5772156649 \dots \end{equation}

\noindent can be represented as a product of an infinite-dimensional row vector, the inverse of a lower triangular matrix, and an infinite-dimensional  column vector:
\begin{equation} \begin{pmatrix} 1 & \frac 12 & \frac 13 & \cdots & \frac 1n & \cdots \end{pmatrix} \begin{pmatrix} 1 & & & & & \\[8pt] \frac 12 & 1 & & & & \\[8pt] \frac 13 & \frac 12 & 1 & & & \\[8pt] \vdots & \vdots & \vdots & \ddots & & \\[8pt] \frac 1n & \frac 1{n-1} & \frac 1{n-2} & \cdots & 1 & \\[8pt] \vdots & \vdots & \vdots & \cdots & \vdots & \ddots \end{pmatrix}^{-1} \begin{pmatrix} \frac 12 \\[8pt] \frac 13 \\[8pt] \frac 14 \\[8pt] \vdots \\[8pt] \frac 1{n+1} \\[8pt] \vdots \end{pmatrix}. \end{equation}

\noindent  Kenter's proof uses induction, definite integrals, convergence of power series, and Abel's Theorem.  In this paper, we recast this statement using the language of Riordan matrices.  We exhibit another proof as well as a generalization.  Our main result is the

\begin{main} \label{main} Consider sequences $\{ a_0, \, a_1, \, \dots, \, a_n, \, \dots \}$, $\{ b_0, \, b_1, \, \dots, \, b_n, \, \dots \}$ and $\{ c_0, \, c_1, \, \dots, \, c_n, \, \dots \}$ of complex numbers such that $a_0, \, b_0, \, c_0 \neq 0$; as well as an integer exponent $d$.  Assume that 
\begin{itemize}
\item[(i)] the power series $a(x) = \sum_n a_n \, x^n$, $b(x) = \sum_n b_n \, x^n$, $c(x) = \sum_n c_n \, x^n$, and $b(x)^d $ are convergent in the interval $|x| < 1$; and
\item[(ii)] the following complex residue exists:
\begin{equation*} \text{Res}_{z = 0} \left[ \dfrac {a(z) \, b(z^{-1})^{d} \, c(z^{-1})}{z} \right] = \dfrac {1}{2 \pi i} \oint_{|z| = 1} a(z) \, b(z^{{-1}})^{d} \, c(z^{{-1}}) \, \dfrac {dz}{z}. \end{equation*}
\end{itemize}
Then the matrix product
\begin{equation*} \begin{pmatrix} a_0 & a_1 & a_2 & \cdots & a_n & \cdots \end{pmatrix} \begin{pmatrix} b_0 & & & & & \\[8pt] b_1 & b_0 & & & & \\[8pt] b_2 & b_1 & b_0 & & & \\[8pt] \vdots & \vdots & \vdots & \ddots & & \\[8pt] b_n & b_{n-1} & b_{n-2} & \cdots & b_0 & \\[8pt] \vdots & \vdots & \vdots & \cdots & \vdots & \ddots \end{pmatrix}^{d} \begin{pmatrix} c_0 \\[8pt] c_1 \\[8pt] c_2 \\[8pt] \vdots \\[8pt] c_n \\[8pt] \vdots \end{pmatrix} \end{equation*}
\noindent is equal to the above residue.
\end{main}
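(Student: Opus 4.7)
The plan is to recognize the central Toeplitz matrix as a Riordan matrix in the Appell subgroup, use the defining action of Riordan matrices on column vectors to collapse the matrix--column product into a product of generating functions, and then convert the resulting scalar pairing into the stated residue via a Laurent-series coefficient extraction.

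Let $B$ denote the central lower-triangular Toeplitz matrix with $(i,j)$-entry $b_{i-j}$ for $i \ge j$. In Riordan notation $B = (b(x), x)$ is an element of the Appell subgroup, whose group law satisfies $(g,x)(h,x) = (gh,x)$. Consequently $B^{d} = (b(x)^{d}, x)$ for every integer $d$; the hypothesis $b_0 \neq 0$ guarantees that $b(x)^{d}$ is a bona fide formal power series even when $d$ is negative. The standard action of Riordan matrices on column vectors then gives that $B^{d}\mathbf{c}$ has generating function $b(x)^{d}c(x)$, so its $n$-th entry is $e_n := [x^n]\bigl(b(x)^{d}c(x)\bigr)$. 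Pairing with the row vector $\mathbf{a}$ therefore reduces the full matrix product to the scalar series $\sum_{n \ge 0} a_n e_n$, which converges under hypothesis (i).

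To identify this sum with the residue, expand $a(z) = \sum_{m \ge 0} a_m z^m$ and $b(z^{-1})^{d}c(z^{-1}) = \sum_{n \ge 0} e_n z^{-n}$ as formal Laurent series. Their product has constant term (coefficient of $z^{0}$) equal to $\sum_{n \ge 0} a_n e_n$; equivalently, after dividing by $z$, the coefficient of $z^{-1}$ is the same number, which is by definition $\text{Res}_{z=0}\bigl[a(z)\,b(z^{-1})^{d}\,c(z^{-1})/z\bigr]$. Hypothesis (ii) then identifies this residue with the contour integral over $|z|=1$, giving the stated identity.

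The main obstacle is reconciling the formal Laurent-series computation with the complex-analytic contour integral. The series defining $a(z)$ converges on $|z| < 1$, while those defining $b(z^{-1})$ and $c(z^{-1})$ converge on $|z| > 1$; the two regions meet only on the unit circle. Hypothesis (i) provides convergence of each factor up to the boundary, and hypothesis (ii) postulates the existence of the contour integral itself. The termwise interchange of summation and integration on $|z|=1$ can then be justified via Abel's theorem (approaching the boundary radially from each side) or by dominated convergence, completing the identification of the formal residue with the analytic contour integral.
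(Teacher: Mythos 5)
Your proposal is correct and follows essentially the same route as the paper: both reduce the matrix product to the scalar sum $\sum_n a_n f_n$ with $f(x) = b(x)^d c(x)$ via the Appell-subgroup multiplication law, and then identify that sum with the coefficient of $z^{-1}$ in $a(z)\,b(z^{-1})^d c(z^{-1})/z$. The only difference is cosmetic: where you invoke Abel's theorem or dominated convergence to justify the boundary interchange, the paper carries out the same Abel-type limit explicitly by integrating $a(z)\,f(z^\ast)$ over the circle $|z|=r$ (using uniform convergence for $r<1$ to get $\sum_n a_n f_n r^{2n}$) and letting $r \to 1$.
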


\noindent The infinite-dimensional lower triangular matrix is an example of a Riordan matrix.  Specifically, it is that Riordan matrix associated with the power series $b(x)^d$.  Kenter's result follows by careful analysis of the power series
\begin{equation} \begin{matrix} a(x) & = & - \dfrac {\log \, (1-x)}{x} & = & \displaystyle 1 + \frac 12 \, x + \frac 13 \, x^{2} + \cdots + \frac 1{n+1} \, x^{n} + \cdots \\[15pt] b(x)^{-1} & = & - \dfrac {x}{\log \, (1-x)} & = & \displaystyle 1 - \frac 12 \, x -\frac 1{12} \, x^{2} - \frac 1{24} \, x^{3} - \cdots - L_n \, x^n - \cdots \\[15pt] c(x) & = & \dfrac {a(x) - 1}{x} & = & \displaystyle \displaystyle \frac 12 + \frac 13 \, x + \frac 14 \, x^{2} + \cdots + \frac 1{n+2} \, x^{n} + \cdots \end{matrix} \end{equation}

\noindent The coefficients $L_n$ are sometimes called the ``logarithmic numbers'' or the ``Gregory coefficients''; these are basically the Bernoulli numbers of the second kind up to a choice of sign.  (Kenter employs the coefficients $c_k = - L_k$.)  The idea of this paper is that we have the matrix product
\begin{equation} \begin{pmatrix} 1 & & & & & \\[8pt] \frac 12 & 1 & & & & \\[8pt] \frac 13 & \frac 12 & 1 & & & \\[8pt] \vdots & \vdots & \vdots & \ddots & & \\[8pt] \frac 1n & \frac 1{n-1} & \frac 1{n-2} & \cdots & 1 & \\[8pt] \vdots & \vdots & \vdots & \cdots & \vdots & \ddots \end{pmatrix}^{-1} \begin{pmatrix} \frac 12 \\[8pt] \frac 13 \\[8pt] \frac 14 \\[8pt] \vdots \\[8pt] \frac 1{n+1} \\[8pt] \vdots \end{pmatrix} = \begin{pmatrix} \frac 12 \\[8pt] \frac 1{12} \\[8pt] \frac 1{24} \\[8pt] \vdots \\[8pt] L_n \\[8pt] \vdots \end{pmatrix}, \end{equation}
\noindent which is equivalent to the recursive identity $\sum_{m=0}^{n-1} L_m/(n-m) = 0$, that is valid whenever $n = 2, \, 3, \, 4, \, \dots$.  The matrix product, and hence the recursive identity, can be \emph{derived} from properties of Riordan matrices.  Kenter's result follows from the identity $\sum_{m=1}^{\infty} L_m / m = \gamma$, which in turn follows from an identity involving a definite integral.

As another consequence of our main result, we can also show that Euler's number
\begin{equation} e = \displaystyle \lim_{n \to \infty} \left( 1 + \dfrac {1}{n} \right)^n = 2.7182818284 \dots \end{equation}

\noindent can be represented as a product of an infinite-dimensional row vector, a lower triangular matrix, and an infinite-dimensional  column vector.

\begin{euler} \label{euler} For any integers $p$, $q$, and $d$ with $p \, q > 1$, the number
\begin{equation*} \dfrac {p \, q}{p \, q - 1} \, \sqrt[p]{e^d} = \displaystyle \lim_{n \to \infty} \left[ \dfrac {p \, q}{p \, q - 1}  \left( 1 + \dfrac {1}{p \, n} \right)^{d n} \right] \end{equation*}
\noindent is equal to the matrix product
\begin{equation*} \begin{pmatrix} 1 & \frac 1{p} & \frac 1{p^2} & \cdots & \frac 1{p^n} & \cdots \end{pmatrix} \begin{pmatrix} 1 & & & & & \\[8pt] \frac 1{1!} & 1 & & & & \\[8pt] \frac 1{2!} & \frac 1{1!} & 1 & & & \\[8pt] \vdots & \vdots & \vdots & \ddots & & \\[8pt] \frac 1{n!} & \frac 1{(n-1)!} & \frac 1{(n-2)!} & \cdots & 1 & \\[8pt] \vdots & \vdots & \vdots & \cdots & \vdots & \ddots \end{pmatrix}^{d} \begin{pmatrix} 1 \\[8pt] \frac 1{q} \\[8pt] \frac 1{q^2} \\[8pt] \vdots \\[8pt] \frac 1{q^n} \\[8pt] \vdots \end{pmatrix}. \end{equation*}
\end{euler}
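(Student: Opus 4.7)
The plan is to apply Theorem \ref{main} with the choices $a_n = 1/p^n$, $b_n = 1/n!$, and $c_n = 1/q^n$. These give the generating functions $a(x) = p/(p-x)$, $b(x) = e^x$, and $c(x) = q/(q-x)$, so that $b(x)^d = e^{dx}$ and the middle matrix in the displayed product is exactly the Riordan matrix associated with $e^{dx}$. Since $p$ and $q$ are integers with $pq > 1$, we have $|p|, |q| \ge 1$, so the radii of convergence of $a$ and $c$ are at least $1$, while $e^x$ and $e^{dx}$ are entire; hence hypothesis (i) of Theorem \ref{main} holds.

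To verify hypothesis (ii) and compute the residue, I would Laurent expand each factor of $a(z) \, b(z^{-1})^d \, c(z^{-1}) / z$ about the origin. The three series $a(z) = \sum_n z^n / p^n$, $b(z^{-1})^d = e^{d/z} = \sum_k d^k/(k!\, z^k)$, and $c(z^{-1}) = \sum_m z^{-m}/q^m$ are absolutely convergent on the annulus $1/|q| < |z| < |p|$, which is nonempty precisely because $pq > 1$. Multiplying on this annulus and collecting the coefficient of $z^{-1}$ forces the index condition $n = k + m$, so the residue equals
\begin{equation*}
\sum_{k, m \ge 0} \frac{d^k}{k!\, p^{k+m}\, q^m} = \left( \sum_{k \ge 0} \frac{(d/p)^k}{k!} \right) \left( \sum_{m \ge 0} \frac{1}{(pq)^m} \right) = \frac{pq}{pq-1} \, e^{d/p},
\end{equation*}
the convergence of the geometric factor using $pq > 1$ once more.

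By Theorem \ref{main}, the matrix product in the statement equals this residue. To recover the limit form stated in the corollary, I invoke the classical identity $e^{d/p} = \lim_{m \to \infty} (1 + 1/m)^{dm/p}$ with the substitution $m = p n$, giving $e^{d/p} = \lim_{n \to \infty} (1 + 1/(pn))^{dn}$, and multiply by $pq/(pq-1)$. The main subtlety in the whole argument is the simultaneous absolute convergence of the three Laurent series, but this is controlled entirely by the hypothesis $pq > 1$; once this is noted, the rest is bookkeeping.
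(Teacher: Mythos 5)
Your proposal is correct and follows essentially the same route as the paper: both apply Theorem \ref{main} with $a(x) = \sum_n x^n/p^n$, $b(x) = e^x$, $c(x) = \sum_n x^n/q^n$, extract the coefficient of $z^{-1}$ via the index condition $n_1 = n_2 + n_3$, and factor the resulting double sum into $e^{d/p} \cdot \frac{pq}{pq-1}$. Your added remarks on the annulus of convergence and the limit identity $e^{d/p} = \lim_{n\to\infty}(1+1/(pn))^{dn}$ are slightly more careful than the paper's treatment but do not change the argument.
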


In the process of proving these generalizations, we present a representation theoretic view of Riordan matrices.  That is, we consider the matrices as representations $\pi: G \to GL(V)$ of a certain group $G$ -- namely, the Riordan group -- acting on an infinite-dimensional vector space $V$ -- namely, the collection of those formal power series $h(x)$ in $\mathbb C \llbracket x \rrbracket$ where $h(0) = 0$.

\section{Introduction to Riordan Matrices}

We wish to list several key results in the theory of Riordan matrices.  To do so, we recast this theory using techniques from representation theory very much in the spirit of Bacher \cite{Bacher:2006p20934}.  Our ultimate goal in this section is to explain how Riordan matrices are connected to a permutation representation $\pi: G \to GL(V)$ of a certain group $G$ acting on an infinite dimensional vector space $V$.  Some of the notation in the sequel will differ from standard notation such as that give by Shapiro et al. \cite{Shapiro:1991p12411} and Sprugnoli \cite{Sprugnoli:1994p23729}, \cite{Sprugnoli:1995p23716}, but we will explain the connection.  

\subsection{Group Actions}

Before developing the representation theoretic view, we give the definition of a Riordan matrix and few related useful properties.  Let $k$ be a field; it is customary to set $k = \mathbb C$ as the set of complex numbers, but, in practice, $k = \mathbb Q$ is the set of rational numbers.  Set $k \llbracket x \rrbracket$ as the collection of formal power series in an indeterminate $x$; we will view this as a $k$-vector space with countable basis $\bigl \{ 1, \, x, \, x^2, \, \dots, \, x^n, \, \dots \bigr \}$.  For most of this article, we will not be concerned with regions of convergence for these series.

There are three binary operations $k \llbracket x \rrbracket \times k \llbracket x \rrbracket \to k \llbracket x \rrbracket$ which will be of importance to us, namely multiplication $\bullet$, composition $\circ$, and addition $+$.  Explicitly, if we write
\begin{equation}  f(x) = \sum_{n=0}^{\infty} f_n \, x^n \qquad \text{and} \qquad g(x) = \sum_{n=0}^{\infty} g_n \, x^n \end{equation}
\noindent then we have the formal power series
\begin{equation} \begin{aligned} \bigl( f \bullet g \bigr)(x) & = \sum_{n=0}^{\infty} \left[ \sum_{m=0}^n f_m \, g_{n-m} \right] x^n \\ \bigl( f \circ g \bigr)(x) & = \sum_{n=0}^{\infty} \left[ \sum_{m=0}^{\infty} f_m \left( \sum_{n_1 + \cdots + n_m = n} g_{n_1} \, \cdots \, g_{n_m} \right) \right] x^n \\ \bigl( f + g \bigr)(x) & = \sum_{n=0}^{\infty} \biggl[  f_n + g_n \biggr] x^n \end{aligned} \end{equation}

\noindent There are three subsets of the vector space $k \llbracket x \rrbracket$ which will be of interest to us in the sequel.

\begin{appell} \label{appell} Define the subsets
\begin{equation*} \begin{aligned} H & = \left \{ f(x) \in k \llbracket x \rrbracket \ \biggl| \ f(0) \neq 0 \right \} \\ K & = \left \{ g(x) \in k \llbracket x \rrbracket \ \biggl| \ \text{$g(0) = 0$ yet $g'(0) \neq 0$} \right \} \\ V & = \left \{ h(x) \in k \llbracket x \rrbracket \ \biggl| \ h(0) = 0 \right \} \end{aligned} \end{equation*}
\begin{enumerate}
\item[(i)] $H$ is a group under multiplication $\bullet$, $K$ is a group under composition $\circ$, and $V$ is a group under addition $+$.  In particular, $V$ is a $k$-vector space with countable basis $\bigl \{ x, \, x^2, \, \dots, \, x^n, \, \dots \bigr \}$.
\item[(ii)] The map $\varphi: K \to \text{Aut}(H)$ which sends $g(x) \in K$ to the automorphism $\varphi_g: f(x) \mapsto \bigl( f \circ \overline {g} \bigr)(x)$ is a group homomorphism, where $\overline {g}(x)$ is the compositional inverse of $g(x)$.  In particular, $G = H \rtimes_{\varphi} K$ is a group under the binary operation ${\ast}: G \times G \to G$ defined by
\begin{equation*} \bigl( f_1, \, g_1 \bigr) {\ast} \bigl( f_2, \, g_2 \bigr) = \bigl( f_1 \bullet \varphi_{g_1}(f_2), \ g_1 \circ g_2 \bigr). \end{equation*}
\item[(iii)] The map ${\ast}: G \times V \to V$ defined by $\bigl(f, \, g \bigr) {\ast} h = f \bullet \bigl( h \circ \overline {g} \bigr)$ is a group action of $G$ on $V$. 
\end{enumerate}
\end{appell}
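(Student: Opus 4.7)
The plan is to verify the three parts of the proposition essentially axiomatically, treating (i) as a warm-up and concentrating on the compatibility calculations in (ii) and (iii).

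For (i), I would establish each of $H$, $K$, $V$ as a group by checking the axioms directly. Closure is built into the defining conditions: if $f_1, f_2 \in H$ then $(f_1 \bullet f_2)(0) = f_1(0) f_2(0) \neq 0$; if $g_1, g_2 \in K$ then $(g_1 \circ g_2)(0) = g_1(0) = 0$ and $(g_1 \circ g_2)'(0) = g_1'(0)\, g_2'(0) \neq 0$; and $V$ is obviously closed under $+$. Associativity of $\bullet$, $\circ$, $+$ on $k\llbracket x \rrbracket$ is standard. The identities are $1$, $x$, $0$ respectively. The one subtle point is the existence of inverses: for $f \in H$ the reciprocal $1/f$ is built by solving $\sum_{m=0}^n f_m r_{n-m} = \delta_{n,0}$ recursively, which uses $f_0 \neq 0$; for $g \in K$ the compositional inverse $\overline{g}$ is furnished by the formal inverse-function theorem, which uses $g'(0) \neq 0$; for $V$ additive inverses are trivial. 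This part, while longer to write out, is routine.

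For (ii), the main task is to show $\varphi_g \in \text{Aut}(H)$ and that $\varphi$ is a group homomorphism (with $K$ regarded as a group under $\circ$). To see $\varphi_g(f) \in H$, note $\varphi_g(f)(0) = f(\overline{g}(0)) = f(0) \neq 0$. Multiplicativity $\varphi_g(f_1 \bullet f_2) = \varphi_g(f_1) \bullet \varphi_g(f_2)$ reduces to the distributive identity $(f_1 \bullet f_2) \circ h = (f_1 \circ h) \bullet (f_2 \circ h)$ for any $h \in V$, which I would verify by comparing coefficients. That $\varphi_g$ is invertible follows from $\varphi_{\overline{g}}$ being its inverse, once we establish the homomorphism property $\varphi_{g_1 \circ g_2} = \varphi_{g_1} \circ \varphi_{g_2}$. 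The latter is the key computation: using associativity of composition and the identity $\overline{g_1 \circ g_2} = \overline{g_2} \circ \overline{g_1}$, we get $\varphi_{g_1 \circ g_2}(f) = f \circ (\overline{g_2} \circ \overline{g_1}) = (f \circ \overline{g_2}) \circ \overline{g_1} = \varphi_{g_1}(\varphi_{g_2}(f))$. Given this homomorphism, the general theory of semidirect products guarantees that $G = H \rtimes_\varphi K$ is a group under the specified $\ast$; I would either quote this or verify associativity and inverses ($(f,g)^{-1} = (\varphi_{\overline{g}}(f^{-1}),\, \overline{g})$) in a couple of lines.

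For (iii), I would first check that the codomain is correct: $\bigl((f,g) \ast h\bigr)(0) = f(0)\cdot h(\overline{g}(0)) = f(0)\cdot h(0) = 0$, so the result lies in $V$. The identity axiom is immediate since $(1, x) \ast h = 1 \bullet (h \circ x) = h$. The compatibility axiom is the crux of the argument, and it reduces to the same distributivity and associativity facts used in (ii). Explicitly, writing both sides out:
\begin{equation*}
\bigl((f_1,g_1) \ast (f_2,g_2)\bigr) \ast h = \bigl(f_1 \bullet (f_2 \circ \overline{g_1})\bigr) \bullet \bigl(h \circ \overline{g_2} \circ \overline{g_1}\bigr),
\end{equation*}
\begin{equation*}
(f_1,g_1) \ast \bigl((f_2,g_2) \ast h\bigr) = f_1 \bullet \bigl(\bigl(f_2 \bullet (h \circ \overline{g_2})\bigr) \circ \overline{g_1}\bigr) = f_1 \bullet (f_2 \circ \overline{g_1}) \bullet (h \circ \overline{g_2} \circ \overline{g_1}),
\end{equation*}
and the two expressions agree by associativity and the distributivity of $\circ$ over $\bullet$.

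The main obstacle is bookkeeping rather than any deep content: one must keep straight that $K$ is multiplicative via $\circ$, that inversion in $K$ reverses order, and that composition on the right distributes over $\bullet$. Once those identities are in hand, parts (ii) and (iii) follow by mechanical substitution, so the only nontrivial analytic input is the existence of formal compositional inverses in (i).
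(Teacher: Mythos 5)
Your proposal is correct and follows essentially the same route as the paper: direct verification of the group axioms in (i) via formal reciprocals and compositional inverses, the identity $\overline{g_1 \circ g_2} = \overline{g_2} \circ \overline{g_1}$ for the homomorphism property in (ii), and the same compatibility computation in (iii) using distributivity of right-composition over $\bullet$. The only difference is cosmetic: you spell out why $\varphi_g$ is multiplicative, while the paper instead adds a check that the action distributes over addition in $V$ (needed later for $\pi: G \to GL(V)$ but not for the bare group-action claim).
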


We use $\overline g(x)$ to denote the compositional inverse $g^{-1}(x)$ so that we will not confuse this with the multiplicative inverse $g(x)^{-1}$.  Later, we will show that $G$ is isomorphic to the Riordan group $\mathbf R$.  Moreover, we will show that $H$, a normal subgroup of $G$, is isomorphic to the Appell subgroup of $\mathbf R$.  The motivation of this result is to use the action of $G$ on $V$ to write down a permutation representation $\pi: G \to GL(V)$, then use the canonical basis $\bigl \{ x, \, x^2, \, \dots, \, x^n, \, \dots \bigr \}$ of $V$ to list infinite-dimensional matrices.

\begin{proof} We show (i) to fix some notation to be used in the sequel.  Since $\bigl( f \bullet g \bigr)(0) = f(0) \, g(0) \neq 0$ for any $f(x), \, g(x) \in H$, we see that $\bullet: H \times H \to H$ is an associative binary operation.  The identity is the constant power series $e(x) = 1$, and the inverse of $f(x)$ is its reciprocal, seen to be a power series by expressing as a formal geometric series:
\begin{equation} \begin{aligned} \dfrac {1}{f(x)} & = \dfrac {1}{f_0} \cdot \dfrac {1}{\displaystyle 1 - \sum_{n=0}^{\infty} (-f_n/f_0) \, x^n} \\ & = \sum_{n=0}^{\infty} \left[ \sum_{m=0}^{\infty} \sum_{n_1 + \cdots + n_m = n-m} (-1)^m \, \dfrac {f_{n_1+1} \, \cdots \, f_{n_m+1}}{{f_0}^{m+1}} \right] x^n. \end{aligned} \end{equation}

\noindent Since $\bigl( f \circ g \bigr)(0) = f \bigl( g(0) \bigr) = f(0) = 0$ and $\bigl( f \circ g \bigr)'(0) = f' \bigl( g(0) \bigr) \, g'(0) = f'(0) \, g'(0) \neq 0$ for any $f(x), \, g(x) \in K$, we see that $\circ: K \times K \to K$ is an associative binary operation.  The identity is the power series $\text{id}(x) = x$, and the inverse of $g(x)$ is its compositional inverse $\overline {g}(x) = \sum_n \overline {g}_n \, x^n$ having the implicitly defined coefficients
\begin{equation} \begin{aligned} & \overline {g}_0 = 0, \\ & \overline {g}_1 = \dfrac {1}{g_1}, \\ \sum_{m=0}^{n} & \overline {g}_m \left[ \sum_{n_1 + \cdots + n_m = n} g_{n_1} \, \cdots \, g_{n_m} \right] = 0 \qquad \text{for $n = 2, \, 3, \, \dots$.} \end{aligned} \end{equation}

\noindent Since $\bigl( f + g \bigr)(0) = f(0) + g(0) = 0$ for any $f(x), \, g(x) \in V$, we see that $+: V \times V \to V$ is an associative binary operation.  The identity is the constant power series $o(x) = 0$, and the inverse of $h(x)$ is the negation $-h(x)$, seen to be a power series with $\bigl( -h \bigr)(0) = -h(0) = 0$.

Now we show (ii). Since $\bigl( f \circ \overline {g} \bigr)(0) = f \bigl( \overline {g}(0) \bigr) = f(0) \neq 0$ for any $f(x) \in H$ and $g(x) \in K$, we see that $\varphi: K \to \text{Aut}(H)$ is well-defined.  Given $g(x), \, h(x) \in K$ we have $\varphi_g \circ \varphi_h = \varphi_{g \circ h}$ because for all $f(x) \in H$ we have
\begin{equation} \begin{aligned} \biggl( \varphi_g \circ \varphi_h \biggr) \bigl[ f(x) \bigr] & = \varphi_g \biggl[ \bigl( f \circ \overline {h} \bigr)(x) \biggr] \\ & = \biggl( f \circ \overline {h} \circ \overline {g} \biggr)(x) = \biggl( f \circ \overline {g \circ h} \biggr)(x) \\ & = \varphi_{g \circ h} \bigl[ f(x) \bigr]. \end{aligned} \end{equation}

\noindent Hence $\varphi: K \to \text{Aut}(H)$ is indeed a group homomorphism.  The semi-direct product $G = H \rtimes_{\varphi} K$ consists of pairs $\bigl( f(x), \, g(x) \bigr)$ with $f(x) \in H$ and $g(x) \in K$, where the binary operation ${\ast}: G \times G \to G$ is defined by 
\begin{equation} \biggl( f_1(x), \ g_1(x) \biggr) {\ast} \biggl( f_2(x), \ g_2(x) \biggr) = \biggl( f_1(x) \, f_2 \bigl( \overline {g_1}(x) \bigr),  \ g_1 \bigl( g_2(x) \bigr) \biggr). \end{equation}

Finally, we show (iii). The map ${\ast}: G \times V \to V$ is defined as the formal identity
\begin{equation} \biggl(f(x), \ g(x) \biggr) {\ast} h(x) = f(x) \, h \bigl( \overline {g}(x) \bigr). \end{equation}

\noindent Since $\bigl[ \bigl(f, \, g \bigr) {\ast} h \bigr](0) = f(0) \ h \bigl( \overline {g}(0) \bigr) = f(0) \, h(0) = 0$, we see that the map ${\ast}: G \times V \to V$ is well-defined.  As the identity element of $G$ is $\bigl( e(x), \, \text{id}(x) \bigr) = \bigl( 1, \, x \bigr)$, we see that $\bigl( e(x), \, \text{id}(x) \bigr) {\ast} h(x) = h(x)$ so that it acts trivially on $V$. Given two elements $\bigl( f_1, \, g_1 \bigr), \, \bigl( f_2, \, g_2 \bigr) \in G$ and $h(x) \in V$, we have the identity
\begin{equation} \begin{aligned} \biggl( f_1(x), \, g_1(x) \biggr) & {\ast} \biggl[  \biggl( f_2(x), \, g_2(x) \biggr)  {\ast} h(x) \biggr] \\ & = \biggl( f_1(x), \, g_1(x) \biggr) {\ast} \biggl[  f_2(x) \ h \bigl( \overline {g_2}(x) \bigr) \biggr] \\ & = f_1(x) \ f_2 \bigl( \overline {g_1}(x) \bigr) \ h \bigl( \overline {g_2} \circ \overline {g_1}(x) \bigr) \\ & = f_1(x) \ f_2 \bigl( \overline {g_1}(x) \bigr) \ h \bigl( \overline {g_1 \circ g_2}(x) \bigr)  \\ & = \biggl( f_1(x) \, f_2 \bigl( \overline {g_1}(x) \bigr),  \ g_1 \bigl( g_2(x) \bigr) \biggr) {\ast} h(x) \\ & = \biggl[ \biggl( f_1(x), \, g_1(x) \biggr) {\ast} \biggl( f_2(x), \, g_2(x) \biggr) \biggr] {\ast} h(x). \end{aligned} \end{equation}

\noindent Similarly, given two elements $h_1(x), \, h_2(x) \in V$ and $\bigl( f, \, g \bigl) \in G$, we have the identity
\begin{equation} \begin{aligned} \biggl( f(x), \ g(x) \biggr) & {\ast} \biggl[ h_1(x) + h_2(x) \biggr] \\ & = f(x) \, \biggl[ h_1 \bigl( \overline {g}(x) \bigr) + h_2 \bigl( \overline {g}(x) \bigr) \biggr] \\ & = \biggl( f(x), \ g(x) \biggr) {\ast} h_1(x) + \biggl( f(x), \ g(x) \biggr) {\ast} h_2(x). \end{aligned} \end{equation}
\noindent Hence ${\ast}: G \times V \to V$ is indeed a group action.  \end{proof}

\subsection{Riordan Matrices.}  Recall that the set
\begin{equation} V = \left \{ h(x) \in k \llbracket x \rrbracket \ \biggl| \ h(0) = 0 \right \} \end{equation}

\noindent is a $k$-vector space with countable basis $\bigl \{ x, \, x^2, \, \dots, \, x^n, \, \dots \bigr \}$.  Since the semi-direct product $G = H \rtimes_{\varphi} K$ acts on $V$, we have a ``permutation'' representation $\pi: G \to GL(V)$.  Explicitly, this representation is defined on the basis elements of $V$ via the formal identity
\begin{equation} \begin{aligned} \biggl( f(x), \, g(x) \biggr) {\ast} \, x^m & = f(x) \, \bigl[ \overline {g}(x) \bigr]^m \\ & = \sum_{n=1}^{\infty} l_{n,m} \, x^n \qquad \text{for $m = 1, \, 2, \, 3, \, \dots$.} \end{aligned} \end{equation}

\noindent (Recall that $\overline {g}(x)$ is the compositional inverse of $g(x)$.) The matrix with respect to the basis $\bigl \{ x, \, x^2, \, \dots, \, x^n, \, \dots \bigr \}$ is given by the lower triangular matrix
\begin{equation} \pi \biggl( f(x), \ g(x) \biggr) = \begin{pmatrix} l_{1,1} & & & & & \\[5pt] l_{2,1} & l_{2,2} & & & & \\[5pt] l_{3,1} & l_{3,2} & l_{3,3} & & & \\[5pt] \vdots & \vdots & \vdots  & \ddots & & \\[5pt] l_{n,1} & l_{n,2} & l_{n,3} & \cdots & l_{n,n} & \\[5pt] \vdots & \vdots & \vdots & \cdots & \vdots & \ddots \end{pmatrix}. \end{equation}

\noindent Recall that $g(0) = 0$ yet $f(0), \, g'(0) \neq 0$.  The following result explains the main multiplicative property of these matrices.

\begin{product} \label{product} Continue notation as above.
\begin{itemize}
\item[(i)] $\pi: G \to GL(V)$ is a group homomorphism.  That is,
\begin{equation*} \pi \biggl( f_1(x), \ g_1(x) \biggr) \ \pi \biggl( f_2(x), \ g_2(x) \biggr) = \pi \biggl( f_1(x) \, f_2 \bigl( \overline {g_1}(x) \bigr),  \ g_1 \bigl( g_2(x) \bigr) \biggr). \end{equation*}
\item[(ii)] For a generating function $t(x) = t_0 + t_1 \, x + t_2 \, x^2 + \cdots$ with $t_0 \neq 0$,
\begin{equation*} \pi \biggl( f(x), \ g(x) \biggr) \ \pi \biggl( t(x), \ \text{id}(x) \biggr) = \begin{pmatrix} \displaystyle \sum_{p=1}^{m} l_{n,p} \ t_{p-m} \end{pmatrix}_{n,m \geq 1}. \end{equation*} 
\end{itemize}
\end{product}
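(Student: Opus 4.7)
The plan is to reduce both parts of Theorem \ref{product} to the group-action identities established in Proposition \ref{appell}(iii). For part (i), the key observation is that $\pi$ is by construction the permutation representation attached to the action ${\ast} : G \times V \to V$: the matrix entries $l_{n,m}$ are defined precisely so that $(f,g) {\ast} x^m = \sum_{n} l_{n,m} \, x^n$. Since a permutation representation of a group action is automatically a homomorphism, I would verify the asserted identity by applying both sides to each basis vector $x^m \in V$ and invoking associativity of the action. Under the left-hand side, $x^m$ maps to $\bigl( (f_1,g_1) {\ast} (f_2,g_2) \bigr) {\ast} x^m$; under the right-hand side, it maps to $(f_1,g_1) {\ast} \bigl[ (f_2,g_2) {\ast} x^m \bigr]$. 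These agree by Proposition \ref{appell}(iii), and expanding each in the basis $\{x^n\}_{n \geq 1}$ matches column $m$ of the matrix on the left with column $m$ of the product on the right.

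For part (ii), I would first compute $\pi(t, \text{id})$ explicitly. Since $\overline{\text{id}}(x) = x$, the definition of ${\ast}$ gives $(t, \text{id}) {\ast} x^m = t(x) \, x^m = \sum_{k \geq 0} t_k \, x^{k+m}$, so $\pi(t, \text{id})$ is the Toeplitz-type lower-triangular matrix whose $(p,m)$-entry is $t_{p-m}$ for $p \geq m$ and $0$ otherwise. Ordinary matrix multiplication against $\pi(f,g) = (l_{n,p})$ then produces the asserted formula, with the effective range of the summation index $p$ determined by the overlapping triangular supports, namely $1 \leq p \leq n$ from the first factor and $p \geq m$ from the second.

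The main step requiring care is the bookkeeping of index ranges in part (ii), since the claimed formula must be compatible with the lower-triangular supports of both factors. I do not expect convergence to be an obstacle: each row of $\pi(f,g)$ and of $\pi(t, \text{id})$ has only finitely many nonzero entries, so every entry of the matrix product is a finite sum. Once the Toeplitz structure of $\pi(t, \text{id})$ has been isolated, the remainder of the argument is a mechanical translation of Proposition \ref{appell}(iii) into matrix language.
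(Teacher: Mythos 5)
Your proposal is correct and follows essentially the same route as the paper: part (i) is obtained by applying the associativity identity of Proposition \ref{appell}(iii) to the basis vectors $x^m$, and part (ii) by computing the Toeplitz form of $\pi\bigl(t, \text{id}\bigr)$ and performing the (row-finite) matrix multiplication. One point in your favour: your index bookkeeping gives the correct effective range $m \leq p \leq n$ for the sum in (ii), whereas the displayed formula $\sum_{p=1}^{m} l_{n,p}\, t_{p-m}$ in the theorem (reproduced verbatim in the paper's proof) appears to contain a typographical slip, since $t_{p-m}$ vanishes for $p < m$ and $l_{n,p}$ vanishes for $p > n$.
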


Such matrices $\pi \bigl( f, \, g \bigr)$ are called the \textit{Riordan matrices associated to the pair $\bigl( f, \, g \bigr)$}.  The collection $\mathbf R$ of Riordan matrices is a group which is isomorphic to $G = H \rtimes_{\varphi} K$; this is the \textit{Riordan group}.   The collection of matrices $\pi\bigl( f, \, \text{id} \bigr)$ is a group which is isomorphic to $H$; this normal subgroup is the \textit{Appell subgroup of $\mathbf{R}$}.  
 
\begin{proof} We show (i).  In the proof of Proposition \ref{appell}, we found that for each $h(x) \in V$ we have the following formal identity involving power series as elements of $k \llbracket x \rrbracket$:
\begin{equation} \begin{aligned} \biggl( f_1(x), \, g_1(x) \biggr) & {\ast} \biggl[  \biggl( f_2(x), \, g_2(x) \biggr)  {\ast} h(x) \biggr] \\ & = \biggl[ \biggl( f_1(x), \, g_1(x) \biggr) {\ast} \biggl( f_2(x), \, g_2(x) \biggr) \biggr] {\ast} h(x) \\ & = \biggl( f_1(x) \, f_2 \bigl( \overline {g_1}(x) \bigr),  \ g_1 \bigl( g_2(x) \bigr) \biggr) \ast h(x). \end{aligned} \end{equation}
\noindent In particular, this holds for the basis elements $h(x) = x^n$, so the result follows. 

Now we show (ii). For a generating function $t(x) = t_0 + t_1 \, x + t_2 \, x^2 + \cdots$, we have the product
\begin{equation} \biggl( t(x), \ \text{id}(x) \biggr) {\ast} x^m = t(x) \, x^m = \sum_{n=1}^{\infty} t_{n-m} \, x^{n}; \end{equation}
\noindent so that matrices in the Appell subgroup are in the form
\begin{equation} \pi \biggl( t(x), \ \text{id}(x) \biggr) = \begin{pmatrix} t_0 & & & & & \\[5pt] t_1 & t_0 & & & & \\[5pt] t_2 & t_1 & t_0 & & & \\[5pt] \vdots & \vdots & \vdots & \ddots & & \\[5pt] t_{n-1} & t_{n-2} & t_{n-3} & \cdots & t_0 & \\[5pt] \vdots & \vdots & \vdots & \cdots & \vdots & \ddots \end{pmatrix}. \end{equation}

\noindent This gives the matrix product
\begin{equation} \begin{aligned} \pi \biggl( f(x), \ g(x) \biggr) \ \pi \biggl( t(x), \ x \biggr) & = \begin{pmatrix} \biggl. l_{n,p} \end{pmatrix}_{n,p \geq 1} \begin{pmatrix} \biggl. t_{p-m} \end{pmatrix}_{p,m \geq 1} \\ & = \begin{pmatrix} \displaystyle \sum_{p=1}^{m} l_{n,p} \ t_{p-m} \end{pmatrix}_{n,m \geq 1} \end{aligned} \end{equation}

\noindent so the result follows. \end{proof}

\subsection{Examples}

Let $k = \mathbb Q$.  Using elementary Calculus, we find the power series expansions
\begin{equation} \begin{aligned} - \frac{\ln \, (1-x)}{x} & = 1 + \frac 12 \, x + \frac 13 \, x^{2} + \frac 14 \, x^{3} + \frac 15 \, x^{4} + \frac 16 \, x^{5} + \cdots, \\[5pt] - \frac{x}{\ln \, (1-x)} & = 1 - \frac 12 \, x -\frac 1{12} \, x^{2} - \frac 1{24} \, x^{3} - \frac {19}{720} \, x^{4} - \frac{3}{160} \, x^{5} + \cdots; \end{aligned} \end{equation}

\noindent which are valid on whenever $|x| < 1$.  Hence the formal power series
\begin{equation} f(x) = 1 + \frac 12 \, x + \frac 13 \, x^{2} + \frac 14 \, x^{3} + \cdots + \frac 1{n+1} \, x^{n} + \cdots \end{equation}
\noindent is an element of $H$, and has multiplicative inverse
\begin{equation} \dfrac {1}{f(x)} = 1 - \frac 12 \, x -\frac 1{12} \, x^{2} - \frac 1{24} \, x^{3} - \frac {19}{720} \, x^{4} - \frac{3}{160} \, x^{5} + \cdots. \end{equation}

\noindent We have the product
\begin{equation} \biggl( f(x), \ \text{id}(x) \biggr) {\ast} x^m = f(x) \, x^m = \sum_{n=1}^{\infty} \dfrac 1{n-m +1} \, x^{n} \end{equation}

\noindent which yields the matrix
\begin{equation} \pi \bigl( f, \, \text{id} \bigr) = \begin{pmatrix} 1 & & & & & \\[8pt] \frac 12 & 1 & & & & \\[8pt] \frac 13 & \frac 12 & 1 & & & \\[8pt] \vdots & \vdots & \vdots & \ddots & & \\[8pt] \frac 1{n} & \frac 1{n-1} & \frac 1{n-2} & \cdots & 1 & \\[8pt] \vdots & \vdots & \vdots & \cdots & \vdots & \ddots \end{pmatrix}. \end{equation}

\noindent Similarly, we have the product
\begin{equation} \begin{aligned} \biggl( \dfrac 1{f(x)}, & \ \text{id}(x) \biggr) {\ast} x^m = \dfrac 1{f(x)} \, x^m \\ & = x^m - \frac 12 \, x^{m+1} -\frac 1{12} \, x^{m+2} - \frac 1{24} \, x^{m+3}  - \frac {19}{720} \, x^{m+4} + \cdots . \end{aligned} \end{equation}

\noindent Since we may use Theorem \ref{product} to conclude that $\pi \bigl( f, \, \text{id} \bigr)^{-1} = \pi \bigl( 1/f, \, \text{id} \bigr)$, we find the identity
\begin{equation} \begin{pmatrix} 1 & & & & & \\[8pt] \frac 12 & 1 & & & & \\[8pt] \frac 13 & \frac 12 & 1 & & & \\[8pt] \vdots & \vdots & \vdots & \ddots & & \\[8pt] \frac 1{n} & \frac 1{n-1} & \frac 1{n-2} & \cdots & 1 & \\[8pt] \vdots & \vdots & \vdots & \cdots & \vdots & \ddots \end{pmatrix}^{-1} = \begin{pmatrix} 1 & & & & & \\[8pt] -\frac 12 & 1 & & & & \\[8pt] -\frac 1{12} & -\frac 12 & 1 & & & \\[8pt] - \frac 1{24} & - \frac 1{12} & - \frac 12 & 1 & & \\[8pt] - \tfrac {19}{720} & - \frac 1{24} & - \frac 1{12} & - \frac 12 & 1 & \\[8pt] \vdots & \vdots & \vdots & \cdots & \vdots & \ddots \end{pmatrix}. \end{equation}

\noindent These matrices are elements of the Appell subgroup of $\mathbf{R}$.

\subsection{Relation with Standard Notation}

Standard references for Riordan matrices are Shapiro et al. \cite{Shapiro:1991p12411} and Sprugnoli \cite{Sprugnoli:1994p23729}, \cite{Sprugnoli:1995p23716}.  The notation $\pi \bigl( f, \, g \bigr)$ employed above is not the typical one, so we explain the connection.  Consider sequences $\bigl \{ G_0, \, G_1, \, G_2, \, \dots, \, G_n, \, \dots \bigr \}$ and $\bigl \{ F_1, \, F_2, \, F_3, \, \dots, \, F_n, \, \dots \bigr \}$ of complex numbers $k = \mathbb C$, where $G_0, \, F_1 \neq 0$.  Upon associating generating functions $G(x) = G_0 + G_1 \, x + G_2 \, x^2 + \cdots$ and $F(x) = F_1 \, x + F_2 \, x^2 + F_3 \, x^3 + \cdots$ to these sequences respectively, the standard notation for a Riordan matrix is that infinite-dimensional matrix given by
\begin{equation} L = \bigl[ G(x), \ F(x) \bigr] = \pi \bigl( G(x), \ \overline {F}(x) \bigr) = \begin{pmatrix} \biggl. l_{n,m} \end{pmatrix}_{n,m \geq 1} \end{equation}
\noindent in terms of the compositional inverse $\overline {F}(x)$ of $F(x)$.  Indeed, the entry $l_{n,m}$ in the $n$th row and $m$th column satisfies the relation
\begin{equation} G(x) \, \bigl[ F(x) \bigr]^m = \sum_{n=1}^{\infty} l_{n,m} \, x^n \qquad \text{for $m = 1, \, 2, \, 3, \, \dots$} \end{equation}

\noindent as formal power series in $\mathbb C \llbracket x \rrbracket$. Equivalently, a Riordan matrix $L$ can be defined by a pair $\bigl( G(x), \, F(x) \bigr)$ of generating functions.

\begin{standard}[Fundamental Theorem of the Riordan Group: \cite{Nkwanta:2005p20729}, \cite{Shapiro:1991p12411}, \cite{Sprugnoli:1995p23716}] Continue notation as above.
\begin{itemize}
\item[(i)] The product of Riordan matrices is again a Riordan matrix.  Explicitly, their product satisfies the relation 
\begin{equation*} \bigl[ G_1(x), \ F_1(x) \bigr] \ \bigl[ G_2(x), \ F_2(x) \bigr] = \biggl[ G_1(x) \, G_2 \bigl( F_1(x) \bigr), \ F_2 \bigl( F_1(x) \bigr) \biggr]. \end{equation*}
\item[(ii)] For a generating function $T(x) = T_0 + T_1 \, x + T_2 \, x^2 + \cdots$ with $T_0 \neq 0$, we have the product
\begin{equation*}\bigl[ G(x), \ F(x) \bigr] \ \bigl[ T(x), \ x \bigr] = \begin{pmatrix} \displaystyle \sum_{p=1}^{m} l_{n,p} \ T_{p-m} \end{pmatrix}_{n,m \geq 1} \end{equation*} 
\end{itemize} \end{standard}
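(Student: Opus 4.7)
My plan is to deduce both parts directly from Theorem \ref{product} via the dictionary $\bigl[G(x), F(x)\bigr] = \pi\bigl(G(x), \overline{F}(x)\bigr)$ already recorded above. Since Theorem \ref{product} has done the structural work of exhibiting $\pi$ as a group homomorphism with a prescribed formula for products by Appell-subgroup matrices, what remains is essentially bookkeeping. The only subtle point is the direction of composition.

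First I would record the ``socks and shoes'' identity for compositional inverses in the group $K$: $\overline{g_1 \circ g_2} = \overline{g_2} \circ \overline{g_1}$, equivalently $\overline{F_1} \circ \overline{F_2} = \overline{F_2 \circ F_1}$. This follows immediately from associativity of $\circ$ together with the relation $g \circ \overline{g} = \overline{g} \circ g = \text{id}$ established in the proof of Proposition \ref{appell}(i). For part (i), I would then set $f_i(x) = G_i(x)$ and $g_i(x) = \overline{F_i}(x)$, so that $\overline{g_i}(x) = F_i(x)$ and $[G_i(x), F_i(x)] = \pi(f_i, g_i)$. Theorem \ref{product}(i) now gives
\begin{equation*}
\pi(f_1, g_1) \, \pi(f_2, g_2) = \pi\bigl( f_1 \cdot f_2(\overline{g_1}), \ g_1 \circ g_2 \bigr) = \pi\bigl( G_1(x) \, G_2(F_1(x)), \ \overline{F_1} \circ \overline{F_2} \bigr),
\end{equation*}
and the second argument rewrites via the compositional identity as $\overline{F_2 \circ F_1}$. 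Applying the dictionary in reverse yields $\bigl[ G_1(x) \, G_2(F_1(x)), \ F_2(F_1(x)) \bigr]$, as desired.

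For part (ii), I would observe that $\overline{x} = x$, so $\bigl[T(x), x\bigr] = \pi\bigl(T(x), \text{id}(x)\bigr)$. With this in hand the claim follows verbatim from Theorem \ref{product}(ii), since the matrix entries $l_{n,m}$ of $[G(x), F(x)] = \pi(G(x), \overline{F}(x))$ are defined by precisely the same power series expansion $G(x) \, [F(x)]^m = \sum_n l_{n,m} \, x^n$ in either notation.

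The main (and essentially only) obstacle will be keeping the order of composition straight: because $\pi$ conjugates $g$ to $\overline{g}$ when acting on basis elements, the Riordan bracket notation composes in the \emph{opposite} order from the $\pi$-parameters. The single place where a sign-of-order error could plausibly creep in is in guaranteeing that $F_2 \circ F_1$, and not $F_1 \circ F_2$, ends up as the second component in part (i). Once the socks-and-shoes identity is invoked at the right moment, everything else is formal.
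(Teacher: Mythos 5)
Your proposal is correct and follows essentially the same route as the paper: the paper likewise sets $f_i(x) = G_i(x)$ and $g_i(x) = \overline{F}_i(x)$, invokes Theorem \ref{product}(i) to get $\pi\bigl(f_1 \, f_2(\overline{g_1}), \, g_1 \circ g_2\bigr)$, and translates back to bracket notation, with part (ii) likewise read off directly from Theorem \ref{product}(ii). The only difference is that you make explicit the identity $\overline{F_1} \circ \overline{F_2} = \overline{F_2 \circ F_1}$ needed to land on $F_2(F_1(x))$, a step the paper passes over silently.
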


\begin{proof} Statement (i) is shown in \cite[Eq. 5]{Shapiro:1991p12411} and \cite[Proof of Thm. 2.1]{Nkwanta:2005p20729}, but we give an alternate proof.  Upon denoting $f_i(x) = G_i(x)$ and $g_i(x) = \overline {F}_i(x)$ for $i = 1$ and $2$, we find the matrix product
\begin{equation} \begin{aligned} \bigl[ G_1(x), \ F_1(x) \bigr] & \ \bigl[ G_2(x), \ F_2(x) \bigr] \\ & = \pi \biggl( f_1(x), \ g_1(x) \biggr) \ \pi \biggl( f_2(x), \ g_2(x) \biggr) \\ & = \pi \biggl( f_1(x) \, f_2 \bigl( \overline {g_1}(x) \bigr),  \ g_1 \bigl( g_2(x) \bigr) \biggr) \\ & = \biggl[ G_1(x) \, G_2 \bigl( F_1(x) \bigr), \ F_2 \bigl( F_1(x) \bigr) \biggr] \end{aligned} \end{equation}

\noindent which follows directly from Theorem \ref{product}. Statement (ii) is also shown in \cite{Nkwanta:2005p20729}, but it follows directly from Theorem \ref{product} as well. \end{proof}

\section{Proof of Kenter's Result and Generalizations}

\subsection{Main Result}

We now prove the following:

\vskip 0.1in \noindent \textbf{Theorem \ref{main}.} \emph{Consider sequences $\{ a_0, \, a_1, \, \dots, \, a_n, \, \dots \}$, $\{ b_0, \, b_1, \, \dots, \, b_n, \, \dots \}$ and $\{ c_0, \, c_1, \, \dots, \, c_n, \, \dots \}$ of complex numbers such that $a_0, \, b_0, \, c_0 \neq 0$; as well as an integer exponent $d$.  Assume that 
\begin{itemize}
\item[(i)] the power series $a(x) = \sum_n a_n \, x^n$, $b(x) = \sum_n b_n \, x^n$, $c(x) = \sum_n c_n \, x^n$, and $b(x)^ d $ are convergent in the interval $|x| < 1$; and
\item[(ii)] the following complex residue exists:
\begin{equation*} \text{Res}_{z = 0} \left[ \dfrac {a(z) \, b(z^{-1})^{d} \, c(z^{-1})}{z} \right] = \dfrac {1}{2 \pi i} \oint_{|z| = 1} a(z) \, b(z^{{-1}})^{d} \, c(z^{{-1}}) \, \dfrac {dz}{z}. \end{equation*}
\end{itemize}
Then the matrix product
\begin{equation*} \begin{pmatrix} a_0 & a_1 & a_2 & \cdots & a_n & \cdots \end{pmatrix} \begin{pmatrix} b_0 & & & & & \\[8pt] b_1 & b_0 & & & & \\[8pt] b_2 & b_1 & b_0 & & & \\[8pt] \vdots & \vdots & \vdots & \ddots & & \\[8pt] b_n & b_{n-1} & b_{n-2} & \cdots & b_0 & \\[8pt] \vdots & \vdots & \vdots & \cdots & \vdots & \ddots \end{pmatrix}^{d} \begin{pmatrix} c_0 \\[8pt] c_1 \\[8pt] c_2 \\[8pt] \vdots \\[8pt] c_n \\[8pt] \vdots \end{pmatrix} \end{equation*}
\noindent is equal to the above residue.}

\begin{proof} With the three power series $a(x) = \sum_n a_n \, x^n$, $b(x) = \sum_n b_n \, x^n$, and $c(x) = \sum_n c_n \, x^n$ convergent in the interval $|x| < 1$, consider the power series
\begin{equation} f(x) = b(x)^d \, c(x) = \sum_{n=0}^{\infty} f_n \, x^n \qquad \text{where $|x| < 1$.} \end{equation}

\noindent As elements of the Appell subgroup of $\mathbf{R}$, we invoke Theorem \ref{product} to see that we have the matrix product $\pi \bigl( f(x), \ x \bigr) = \pi \bigl( b(x), \ x \bigr)^{d} \ \pi \bigl( c(x), \ x \bigr)$.  In particular, the first column is given by
\begin{equation} \begin{pmatrix} f_0 \\[8pt] f_1 \\[8pt] f_2 \\[8pt] \vdots \\[8pt] f_n \\[8pt] \vdots \end{pmatrix} = \begin{pmatrix} b_0 & & & & & \\[8pt] b_1 & b_0 & & & & \\[8pt] b_2 & b_1 & b_0 & & & \\[8pt] \vdots & \vdots & \vdots & \ddots & & \\[8pt] b_n & b_{n-1} & b_{n-2} & \cdots & b_0 & \\[8pt] \vdots & \vdots & \vdots & \cdots & \vdots & \ddots \end{pmatrix}^{d} \begin{pmatrix} c_0 \\[8pt] c_1 \\[8pt] c_2 \\[8pt] \vdots \\[8pt] c_n \\[8pt] \vdots \end{pmatrix}. \end{equation}

\noindent Hence the matrix product 
\begin{equation} \label{nkwanta} \begin{pmatrix} a_0 & a_1 & a_2 & \cdots & a_n & \cdots \end{pmatrix} \begin{pmatrix} b_0 & & & & & \\[8pt] b_1 & b_0 & & & & \\[8pt] b_2 & b_1 & b_0 & & & \\[8pt] \vdots & \vdots & \vdots & \ddots & & \\[8pt] b_n & b_{n-1} & b_{n-2} & \cdots & b_0 & \\[8pt] \vdots & \vdots & \vdots & \cdots & \vdots & \ddots \end{pmatrix}^{d} \begin{pmatrix} c_0 \\[8pt] c_1 \\[8pt] c_2 \\[8pt] \vdots \\[8pt] c_n \\[8pt] \vdots \end{pmatrix} \end{equation}
\noindent is equal to the sum $\sum_n a_n \, f_n$.  We wish to evaluate this sum using complex analysis.

By assumption, the power series $a(x)$, $b(x)$, and $c(x)$ are convergent in the interval $|x| < 1$.  Hence, for each fixed real number $r$ satisfying $0 < r < 1$, the functions $a(z)$ and $f(z)$ are \emph{uniformly} convergent inside a closed disk $|z| \leq r$.  Hence we can interchange summation and integration to find the integral around the boundary to be equal to
\begin{equation} \begin{aligned}
\dfrac {1}{2 \pi i} \oint_{|z| = r} & a(z) \, b(z^\ast)^d \, c(z^\ast) \, \dfrac {dz}{z} \\ & = \dfrac {1}{2 \pi i} \oint_{|z| = r} a(z) \, f(z^\ast) \, \dfrac {dz}{z} \\ & = \sum_{n_1=0}^{\infty} \sum_{n_2=0}^{\infty} a_{n_1} \, f_{n_2} \, r^{n_1+n_2} \cdot \dfrac {1}{2 \pi} \int_0^{2 \pi} e^{i (n_1-n_2) \theta} \, d \theta \\ & = \sum_{n=0}^{\infty} a_n \, f_n \, r^{2n}. \end{aligned} \end{equation}

\noindent Here $z^\ast$ is the complex conjugate of $z$.  As $r \to 1$, the integral exists so by Cauchy's Residue Theorem it must be equal to
\begin{equation} \begin{aligned} 
\text{Res}_{z = 0} & \left[ \dfrac {a(z) \, b(z^{-1})^{d} \, c(z^{-1})}{z} \right] \\ & = \lim_{r \to 1} \left[ \dfrac {1}{2 \pi i} \oint_{|z| = 1} a(z) \, b(z^{{-1}})^{d} \, c(z^{{-1}}) \, \dfrac {dz}{z} \right] \\ & = \lim_{r \to 1} \left[ \sum_{n=0}^{\infty} a_n \, f_n \, r^{2n} \right] \\ & = \sum_{n=0}^{\infty} a_n \, f_n. \end{aligned} \end{equation}

\noindent The Theorem follows upon equating this with equation \eqref{nkwanta}. \end{proof}

\subsection{Applications}

We explain how to use Theorem \ref{main} in order to express Euler's number $e = 2.7182818284 \dots $ in terms of Riordan matrices.

\vskip 0.1in \noindent \textbf{Corollary \ref{euler}.} \emph{For any integers $p$, $q$, and $d$ with $p \, q > 1$, the number
\begin{equation*} \dfrac {p \, q}{p \, q - 1} \, \sqrt[p]{e^d} = \displaystyle \lim_{n \to \infty} \left[ \dfrac {p \, q}{p \, q - 1}  \left( 1 + \dfrac {1}{p \, n} \right)^{d n} \right] \end{equation*}
\noindent is equal to the matrix product
\begin{equation*} \begin{pmatrix} 1 & \frac 1{p} & \frac 1{p^2} & \cdots & \frac 1{p^n} & \cdots \end{pmatrix} \begin{pmatrix} 1 & & & & & \\[8pt] \frac 1{1!} & 1 & & & & \\[8pt] \frac 1{2!} & \frac 1{1!} & 1 & & & \\[8pt] \vdots & \vdots & \vdots & \ddots & & \\[8pt] \frac 1{n!} & \frac 1{(n-1)!} & \frac 1{(n-2)!} & \cdots & 1 & \\[8pt] \vdots & \vdots & \vdots & \cdots & \vdots & \ddots \end{pmatrix}^{d} \begin{pmatrix} 1 \\[8pt] \frac 1{q} \\[8pt] \frac 1{q^2} \\[8pt] \vdots \\[8pt] \frac 1{q^n} \\[8pt] \vdots \end{pmatrix}. \end{equation*}}

\begin{proof} The coefficients of the matrices above correspond to the three power series
\begin{equation} \left. \begin{matrix} a(x) & = & \dfrac {1}{1 - x/p} & = & \displaystyle \sum_{n=0}^{\infty} \dfrac {x^n}{p^n} \\[15pt] b(x) & = & e^x & = & \displaystyle \sum_{n=0}^{\infty} \dfrac {x^n}{n!} \\[15pt] c(x) & = & \dfrac {1}{1 - x/q} & = & \displaystyle \sum_{n=0}^{\infty} \dfrac {x^n}{q^n} \end{matrix} \right \} \qquad \text{where $|x| < 1$.} \end{equation}
\noindent For a complex number $z$ with $|z| < 1$, we have the identity
\begin{equation} \begin{aligned} \dfrac {a(z) \, b(z^{-1})^{d} \, c(z^{-1})}{z} & = \biggl[ \dfrac 1{1 - z/p} \biggr] \, \biggl[ e^{d \, z^{-1}} \biggr] \, \biggl[ \dfrac 1{1 - z^{-1} / q} \biggr] \\ & = \sum_{n= -\infty}^{\infty} \left[ \sum_{n_1 - n_2 - n_3 = n+1} \dfrac {d^{n_2}}{p^{n_1} \, n_2! \, q^{n_3}} \right] z^n \end{aligned} \end{equation}
\noindent The residue corresponds to the coefficient of the $z^{-1}$ term, so we consider the terms where $n = -1$:
\begin{equation} \begin{aligned} \text{Res}_{z = 0} \left[ \dfrac {a(z) \, b(z^{-1})^{d} \, c(z^{-1})}{z} \right] & = \sum_{n_1 = n_2 + n_3} \dfrac {d^{n_2}}{p^{n_1} \, n_2! \, q^{n_3}} \\ & = \left[ \sum_{n_2 = 0}^{\infty} \dfrac {1}{n_2!} \left( \dfrac {d}{p} \right)^{n_2} \right] \left[ \sum_{n_3 = 0}^{\infty} \dfrac {1}{(p \, q)^{n_3}} \right] \\ &  = e^{d/p} \, \dfrac {p \, q}{p \, q - 1}. \end{aligned} \end{equation}
\noindent The Corollary follows now from Theorem \ref{main}. \end{proof}

Kenter's result is also an application of Theorem \ref{main}.

\begin{kenter}[\cite{Kenter:1999p20766}] The Euler-Mascheroni constant
\begin{equation*} \gamma = \displaystyle \lim_{n \to \infty} \left[ \left( \sum_{m=1}^n \dfrac 1m \right) - \ln n \right] = 0.5772156649 \dots \end{equation*}
\noindent is equal to the matrix product
\begin{equation*} \begin{pmatrix} 1 & \frac 12 & \frac 13 & \cdots & \frac 1n & \cdots \end{pmatrix} \begin{pmatrix} 1 & & & & & \\[8pt] \frac 12 & 1 & & & & \\[8pt] \frac 13 & \frac 12 & 1 & & & \\[8pt] \vdots & \vdots & \vdots & \ddots & & \\[8pt] \frac 1n & \frac 1{n-1} & \frac 1{n-2} & \cdots & 1 & \\[8pt] \vdots & \vdots & \vdots & \cdots & \vdots & \ddots \end{pmatrix}^{-1} \begin{pmatrix} \frac 12 \\[8pt] \frac 13 \\[8pt] \frac 14 \\[8pt] \vdots \\[8pt] \frac 1{n+1} \\[8pt] \vdots \end{pmatrix}.  \end{equation*}
\end{kenter}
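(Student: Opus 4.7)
The plan is to specialize Theorem \ref{main} with
\[
a(x) = b(x) = -\frac{\ln(1-x)}{x}, \qquad c(x) = \frac{a(x)-1}{x}, \qquad d = -1,
\]
so that $a_n = b_n = 1/(n+1)$ and $c_n = 1/(n+2)$ reproduce exactly the row vector, lower-triangular matrix, and column vector in Kenter's statement (the choice $d = -1$ producing the required matrix inverse). The series $a(x)$, $b(x)$, $c(x)$ are convergent in $|x|<1$, and $b(x)^{-1} = -x/\ln(1-x)$ is likewise analytic there because $\ln(1-x) \sim -x$ gives a removable singularity at the origin.

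To invoke Theorem \ref{main} I would next verify hypothesis (ii). Using $a = b$, one checks that on the unit circle the integrand simplifies to
\[
\frac{a(z)\,b(z^{-1})^{-1}\,c(z^{-1})}{z} = a(z) - a(z)\,b(z^{-1})^{-1},
\]
and a direct estimate shows that the only boundary singularity is logarithmic at $z=1$, which is integrable; hence the contour integral converges absolutely and condition (ii) holds. By Theorem \ref{main} together with its proof, the matrix product equals $\sum_{n\geq 0} a_n f_n$, where $f(x) = b(x)^{-1}c(x)$.

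The computational heart of the proof is the clean evaluation
\[
f(x) = \frac{-x}{\ln(1-x)} \cdot \frac{-\ln(1-x)-x}{x^2} = \frac{1}{x} + \frac{1}{\ln(1-x)},
\]
in which the apparent poles at $x = 0$ cancel, yielding $f(0) = 1/2$ and an analytic extension to $|x|<1$ with continuous boundary value $f(1) = 1$. The Taylor coefficients are $f_0 = 1/2$ and $f_n = L_n$ for $n \geq 1$, the Gregory coefficients of the introduction, which are positive. Abel's theorem applied to this nonnegative tail series therefore gives $\sum_{n\geq 1} L_n = f(1) - 1/2 = 1/2$. Absolute convergence of $\sum_n |f_n|$ now justifies the interchange
\[
\sum_{n=0}^\infty \frac{f_n}{n+1} = \sum_{n=0}^\infty f_n \int_0^1 t^n\,dt = \int_0^1 f(t)\,dt = \int_0^1\left(\frac{1}{t} + \frac{1}{\ln(1-t)}\right)dt.
\]

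The proof concludes by invoking the classical identity $\int_0^1 \bigl(1/(1-u) + 1/\ln u\bigr)\,du = \gamma$, which the integral above becomes after the substitution $u = 1-t$. The main obstacle is really just recognizing the closed form of $f(x)$ (which makes the otherwise intimidating infinite sum tractable) and citing this well-known integral representation of $\gamma$; the remaining items -- convergence on $|x|<1$, integrability at $z=1$, and positivity of the Gregory coefficients -- are routine verifications.
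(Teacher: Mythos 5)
Your proposal is correct and follows essentially the same route as the paper: the same specialization $a(x)=b(x)=-\ln(1-x)/x$, $c(x)=(a(x)-1)/x$, $d=-1$ of Theorem \ref{main}, the same reduction of the residue to $\int_0^1\bigl(1/t+1/\ln(1-t)\bigr)\,dt$ via the Gregory coefficients, and the same classical integral representation of $\gamma$. (One cosmetic remark: in the paper's indexing the coefficients of $f(x)=1/x+1/\ln(1-x)$ are $f_n=L_{n+1}$ rather than $L_n$, but this shift does not affect your argument, which only uses the closed form of $f$ and the positivity of its coefficients.)
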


\begin{proof} The coefficients of the matrices above correspond to the three power series
\begin{equation} \left. \begin{matrix} a(x) & = & - \dfrac {\log \, (1-x)}{x} & = & \displaystyle \sum_{n=0}^{\infty} \dfrac {x^n}{n+1} \\[15pt] b(x) & = & - \dfrac {\log \, (1-x)}{x} & = & \displaystyle \sum_{n=0}^{\infty} \dfrac {x^n}{n+1} \\[15pt] c(x) & = & \dfrac {a(x) - 1}{x} & = & \displaystyle \sum_{n=0}^{\infty} \dfrac {x^n}{n+2} \end{matrix} \right \} \qquad \text{where $|x| < 1$.} \end{equation}
\noindent We will choose the exponent $d = -1$.  We will express the reciprocal as the power series
\begin{equation} \begin{aligned} \dfrac {x}{\log \, (1-x)} & = -1 + \frac 12 \, x  + \frac 1{12} \, x^{2} + \frac 1{24} \, x^{3} + \frac {19}{720} \, x^{4} + \frac{3}{160} \, x^{5} + \cdots \\ & = \sum_{n=0}^{\infty} L_n \, x^n \end{aligned} \end{equation}
\noindent which is also convergent in the interval $|x| < 1$.  (Recall that the coefficients $L_n$ are sometimes called the ``logarithmic numbers'' or the ``Gregory coefficients''.)  For a complex number $z$ with $|z| < 1$, we have the identity
\begin{equation} \begin{aligned} & \dfrac {a(z) \, b(z^{-1})^{d} \, c(z^{-1})}{z} \\ & \qquad = - \dfrac {\log (1-z)}{z} + \left[ - \dfrac {\log \, (1-z)}{z} \right] \cdot \left[ \dfrac {z^{-1}}{\log \, (1-z^{-1})} \right] \\ & \qquad = \sum_{n=0}^{\infty} \dfrac {z^n}{n+1} + \sum_{n=-\infty}^{\infty} \left[ \sum_{m = -n}^{\infty} \dfrac {L_m}{n + m + 1} \right] z^n. \end{aligned} \end{equation}
\noindent The residue corresponds to the coefficient of the $z^{-1}$ term, so we consider the terms where $n = -1$:
\begin{equation} \begin{aligned} \text{Res}_{z = 0} \left[ \dfrac {a(z) \, b(z^{-1})^{e} \, c(z^{-1})}{z} \right] & = \sum_{m = 1}^{\infty} \dfrac {L_m}{m} = \int_0^1 \left[ \sum_{m=1}^{\infty} L_m \, x^{m-1} \right] dx \\ & = \int_0^1 \left[ \dfrac 1x + \dfrac {1}{\log \, (1-x)} \right] dx \\[5pt] & = \gamma. \end{aligned} \end{equation}
\noindent The Corollary follows now from Theorem \ref{main}. \end{proof}

We conclude with stating that Theorem \ref{main} can also be used to show Riordan matrix representations for $\ln 2$ and $\pi^2/6$. Finding matrix representations of other constants, like $\sqrt{2}$, $\pi$, and the Golden Ratio $\phi$, are of interest.

\section{Acknowledgments}

The authors would like to dedicate this work to the memory of David Harold Blackwell (April 24, 1919 -- July 8, 2010).  Both authors gave the recent annual Blackwell Lectures, organized by the National Association of Mathematicians (NAM) as part of the MAA MathFest.  The first author gave his presentation during the summer of 2009, whereas the second gave his during the summer of 2010.

\bibliographystyle{plain}

\end{document}